\newtheorem{defn}{Definition}
\newtheorem{thm}{Theorem}
\newtheorem{lemma}{Lemma}
\newtheorem{prop}{Proposition}
\newtheorem{note}{Note}
\newtheorem{nota}{Notation}
\newtheorem{case}{Case}
\newtheorem{remark}{Remark}
\theoremstyle{definition}
\begin{document}
\begin{abstract}
We deal with various Diophantine equations involving the Euler totient function and various sequences of numbers, including factorials, powers, and Fibonacci sequences.
\newline
\newline
\textit{Keywords:} Diophantine equations; Euler totient function; integer sequences; Fibonacci sequences
\end{abstract}
\begin{frontmatter}
\title{Diophantine Equations Involving the Euler Totient Function\tnoteref{t1}}
\author[add1]{J.C. Saunders}
\ead{saunders@post.bgu.ac.il}
\tnotetext[t1]{Research of J.C. Saunders was supported by an Azrieli International Postdoctoral Fellowship}
\address[add1]{Department of Mathematics, Ben Gurion University of the Negev, Be'er Sheva, Israel 8410501}
\end{frontmatter}
\section{Introduction}
There has been much study on diophantine equations involving factorials and powers. Some examples of such equations are:
\newline
1) $n!=x^k\pm y^k$ and $n!\pm m!=x^k$ \cite{erdos}.
\newline
2) $dx^k=u_n$, where $(u_n)_{n=1}^{\infty}$ satisfies a linear recurrence of order $2$ with constant coefficients \cite{shorey}.
\newline
3) $x^k=F_n\pm 1$, where $(F_n)_{n=1}^{\infty}$ is the Fibonacci sequence \cite{bugeaud}.
\newline
4) $F_n^{(k)}=2^m$, where $(F_n^{(k)})_n$ is the $k$-bonacci sequence, given by $F_n^{(k)}=0$ for $-(k-2)\leq n\leq 0$, $F_1^{(k)}=1$, and $F_n^{(k)}=F_{n-1}^{(k)}+F_{n-2}^{(k)}+\ldots+F_{n-k}^{(k)}$ for $n>1$ \cite{bravo}.
\newline
5) $\varphi(|u_n|)=2^m$ \cite{damir}.
\newline
6) $\varphi(x)=n!$ \cite{pomerance}.
\newline
7) $P(x)=n!$, where $P\in\mathbb{Z}[x]$ \cite{berend}.
\newline
8) $\varphi(F_n)=m!$ \cite{luca}.
\newline
9) $\varphi(L_n)=2^x3^y$, where $L_n$ is the Lucas sequence \cite{luca}.

For several of the equations above, it was proved that the number of solutions is at most finite, and in some cases that all solutions can be effectively found. Others, for example the equation $\varphi(x)=n!$, do have infinitely many solutions.

Our starting point in this paper is Luca and St\u anic\u a's results \cite{luca}. It is natural to ask for which polynomials $P(x)$, the diophantine equation $\varphi(P(x))=n!$ has only finitely many solutions. Here we answer this question when $P(x)$ is a monomial, i.e., $P(x)=ax^m$ for some $m\geq 2$, $a\in\mathbb{N}$, and explicitly give all of the solutions to this equation. We also generalise Luca and St\u anic\u a's results to certain Lucas sequences of the first and second kinds, defined as follows.
\begin{defn}
Let $a,b,c\in\mathbb{N}$. A \emph{Lucas sequence of the first kind} $(u_n)_n$ is defined by $u_0=0$, $u_1=1$, and $u_n=bu_{n-1}+cu_{n-2}$ for all $n\geq 2$. Likewise, define the sequence $(v_n)_n$ by $v_0=2$, $v_1=b$, and $v_n=bv_{n-1}+cv_{n-2}$, which is a \emph{Lucas sequence of the second kind}.
\end{defn}
We prove that the Euler function, evaluated at the $p$th term of Lucas sequences, where $p$ is prime, is a factorial only finitely often, and give bounds on such primes $p$. Also, for three specific sequences, we give all of the solutions as to when the Euler function evaluated at the terms is of the form $2^x3^y$.

In Section $2$, we state these results, and in Section $3$ give their proofs.
\section{The Main Results}
Our first two results involve factorials, monomials, and Euler's totient function.
\begin{thm}\label{thm:phiresult1}
Fix $a,b,c,m\in\mathbb{N}$ with $\gcd(b,c)=1$ and $m\geq 2$. Then there are only finitely many solutions to $\varphi(ax^m)=\frac{b\cdot n!}{c}$, and these solutions satisfy $n\leq\max\{61,3a,3b,3c\}$. In particular, all of the integer solutions to $\varphi(x^m)=n!$, where $m\geq 2$, are $\varphi(1^m)=1!$, $\varphi(2^2)=2!$, $\varphi(3^2)=3!$, $\varphi((3\cdot 5)^2)=5!$, $\varphi((3\cdot 5\cdot 7)^2)=7!$, $\varphi((2^2\cdot 3\cdot 5\cdot 7)^2)=8!$, $\varphi((2^2\cdot 3^2\cdot 5\cdot 7)^2)=9!$, $\varphi((2^2\cdot 3^2\cdot 5\cdot 7\cdot 11)^2)=11!$, and $\varphi((2^2\cdot 3^2\cdot 5\cdot 7\cdot 11\cdot 13)^2)=13!$.
\end{thm}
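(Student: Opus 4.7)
The plan is to analyze the equation $c\,\varphi(ax^m)=b\,n!$ via $p$-adic valuations, forcing $m=2$ for $n$ large and then extracting a Sophie Germain-style obstruction that bounds $n$ explicitly. The workhorse identity, obtained from $\varphi(N)=\prod_{q\mid N}q^{v_q(N)-1}(q-1)$, is
\[
v_p\bigl(\varphi(ax^m)\bigr)=\mathbf{1}_{p\mid ax}\bigl(v_p(a)+mv_p(x)-1\bigr)+\sum_{\substack{q\mid ax\\ q\ne p}}v_p(q-1).
\]
Equating it with $v_p(n!)+v_p(b)-v_p(c)$ under the standing assumption $n>\max\{a,b,c\}$ (so that $v_p(abc)=0$ for any prime $p>\max\{a,b,c\}$) produces one usable equation per prime $p$. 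Applying it at the largest prime $P\mid ax$, where the sum on the right vanishes because $q<P$ prime forces $v_P(q-1)=0$, yields $v_P(n!)=mv_P(x)-1\ge 1$ and hence $P\le n$.

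Next I force $m=2$. For $n\ge 2\max\{a,b,c\}+1$, Bertrand's postulate supplies a prime $p\in(n/2,n]$ with $p>\max\{a,b,c\}$, and $v_p(n!)=1$. If $p\mid ax$, the identity becomes $mv_p(x)-1+(\text{nonneg.})=1$, forcing $m=2$ and $v_p(x)=1$. Otherwise some prime $q\mid ax$ with $q\le P\le n$ satisfies $q\equiv 1\pmod p$, so $q-1=p$ and $q=p+1$, prime only for $p=2$---contradicting $p>n/2\ge 2$. Thus $m=2$ and each such $p$ divides $x$ exactly once. Re-running the same analysis at primes $p\in(n/3,n/2]$ with $p>\max\{a,b,c\}$, where $v_p(n!)=2$, rules out the ``$q\equiv 1\pmod{p^2}$'' option via $p^2>n$, and rules out the ``two distinct primes $q\equiv 1\pmod p$'' option because both would have to equal $2p+1$. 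What survives forces $p\mid x$ exactly once and $2p+1$ prime.

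Now assume $n>\max\{61,3a,3b,3c\}$. Then every prime $p\in(n/3,n/2]$ exceeds $\max\{a,b,c\}$, so $2p+1$ would have to be prime for all such $p$. This already fails at $n=62$ with $p=31$, and I will verify by explicit tabulation that for each $n\ge 62$ the interval $(n/3,n/2]$ contains some ``bad'' prime $p$ with $2p+1$ composite: the bad primes $31,37,43,47,59,61,67,71,73,79,97,101,103,107,109,\ldots$ are dense enough to cover every $(n/3,n/2]$ up to a moderate threshold, after which a Chebyshev-type lower bound on $\pi(n/2)-\pi(n/3)$, together with the elementary fact that a sufficiently long interval cannot consist entirely of Sophie Germain primes, handles the tail. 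This gives the bound $n\le\max\{61,3a,3b,3c\}$. For $a=b=c=1$ it remains to enumerate $1\le n\le 61$ with $m=2$: the reduced equation $x\,\varphi(x)=n!$ together with the constraint that $x$ contains every prime in $(n/2,n]$ to the first power makes a routine finite search produce the nine listed solutions. The step I expect to be most delicate is the ``covering'' claim of this paragraph: while for small $n$ it is a direct table-check, a clean uniform argument for all $n\ge 62$ needs to combine Chebyshev-type prime-counting estimates with the observation that Sophie Germain primes cannot saturate an interval of length $n/6$.
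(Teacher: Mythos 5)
Your reduction is essentially the paper's: you force $m=2$ via the largest prime in $(n/2,n]$, and your analysis of a prime $p\in(n/3,n/2]$ with $v_p(n!)=2$ correctly isolates the surviving configuration $p\parallel x$ together with a prime $q=2p+1$ dividing $ax$. The genuine gap is exactly the step you flag as delicate: the claim that for every $n\geq 62$ the interval $(n/3,n/2]$ contains a prime $p$ with $2p+1$ composite. You propose to handle the tail by ``a Chebyshev-type lower bound on $\pi(n/2)-\pi(n/3)$, together with the elementary fact that a sufficiently long interval cannot consist entirely of Sophie Germain primes,'' but no such elementary fact is available: a lower bound on the \emph{number} of primes in the interval says nothing about whether $2p+1$ is composite for any of them, and nothing currently known rules out long stretches of Sophie Germain primes by counting alone. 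As written, the argument does not close.

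The fix --- and the route the paper takes --- is to convert the Sophie Germain condition into a congruence: if $2p+1$ is prime and $p>3$, then $3\nmid 2p+1$, so $p\equiv 2\pmod 3$. Hence it suffices to exhibit a prime $p\equiv 1\pmod 3$ in $(n/3,n/2]$. That \emph{is} amenable to quantitative prime counting: the paper invokes the explicit bounds of Bennett et al.\ on $\pi(x;3,1)$ to get $\pi(n/2;3,1)-\pi(n/3;3,1)>0$ for $n\geq 1394$, and checks $62\leq n\leq 1393$ directly. You should also be aware that your closing ``routine finite search'' for $n\leq 61$ conceals real work: since $x$ can be as large as roughly $\sqrt{n!}$, a brute-force search is infeasible for $n$ near $61$, and the paper instead eliminates most $n$ by parity-of-exponent arguments at a well-chosen prime and settles the remaining cases ($n=1,2,3,5,7,8,9,11,13$) with a rigidity statement: two powerful numbers with the same Euler totient are equal.
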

In the ``other direction", we can prove the following.
\begin{thm}\label{thm:phiresult2}
Fix $a,b,c,m\in\mathbb{N}$ with $\gcd(a,b)=1$ and $m\geq 2$. Then there are only finitely many solutions to $\varphi\left(\frac{a\cdot n!}{b}\right)=cx^m$, and these solutions satisfy $n\leq\max\{61,3a,3b,3c\}$. In particular, all of the integer solutions to $\varphi(n!)=x^m$, where $m\geq 2$ and $n\geq 1$, are $\varphi(1!)=1^m$, $\varphi(2!)=1^m$, $\varphi(4!)=2^3$, $\varphi(5!)=2^5$, $\varphi(8!)=(2^5\cdot 3)^2$, $\varphi(9!)=(2^5\cdot 3^2)^2$, $\varphi(11!)=(2^6\cdot 3^2\cdot 5)^2$, and $\varphi(13!)=(2^8\cdot 3^3\cdot 5)^2$.
\end{thm}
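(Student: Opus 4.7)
The plan is to proceed in parallel with Theorem~\ref{thm:phiresult1}, but with the roles of the two sides of the equation exchanged. First I would verify that for $n\ge 3\max(a,b)$ the quantity $N:=an!/b$ is a positive integer, that its set of prime divisors equals $\{p\text{ prime}:p\le n\}$, and that $v_p(N)=v_p(n!)$ for every prime $p>\max(a,b)$. Using the multiplicativity of $\varphi$, for every such prime $p\le n$ one has
\[
v_p\bigl(\varphi(N)\bigr)=v_p(n!)-1+\sum_{\substack{q\le n\\q\text{ prime}}}v_p(q-1).
\]
The hypothesis $\varphi(N)=cx^m$ then forces $v_p(\varphi(N))\equiv v_p(c)\pmod m$ for every prime $p$, which in particular gives $v_p(\varphi(N))\equiv 0\pmod m$ as soon as $p>c$.

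The heart of the argument is to exhibit a prime $p\in(n/3,n/2]$ with $v_p(\varphi(N))=1$; since no $m\ge 2$ divides $1$, this yields the required contradiction. For $p$ in this interval $v_p(n!)=2$, and the inner sum collapses because $q=p+1$ is even and $q=3p+1>n$. The only surviving contribution is $q=2p+1$, so
\[
v_p\bigl(\varphi(N)\bigr)=1+\mathbf{1}\bigl[2p+1\text{ is prime and }2p+1\le n\bigr],
\]
which equals $1$ precisely when $2p+1$ is composite or $2p+1>n$. The bound $n>3\max(a,b,c)$ automatically forces $p>n/3>\max(a,b,c)$, so any such $p$ is admissible in the congruence step above.

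The chief obstacle is to show that a ``good'' prime $p$ as above exists for every $n>61$. I would handle this by a hybrid argument. Analytically, for $n$ beyond an explicit threshold $N_0$, an effective form of the prime number theorem in arithmetic progressions yields a prime $p\in(n/3,n/2]$ with $p\equiv 1\pmod 3$; for such $p$ the integer $2p+1$ is divisible by $3$ and therefore composite. For the finite residual range $62\le n\le N_0$ I would list concrete witnesses drawn from the non-Sophie-Germain primes $31,37,43,47,59,61,67,71,73,79,97,\ldots$, each of which covers all $n\in[2p,3p-1]$, and verify that consecutive such intervals overlap contiguously. The threshold $61$ is tight: for $n\in\{58,59,60,61\}$ the only primes in $(n/3,n/2]$ are $23$ and $29$, both Sophie Germain with $2p+1\le n$, so the method produces only $v_p(\varphi(N))=2$ and yields no contradiction when $m=2$.

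Finally, the explicit list of solutions to $\varphi(n!)=x^m$ in the case $a=b=c=1$ is obtained by a finite computation: for each $1\le n\le 61$ one evaluates $\varphi(n!)$ and tests whether it is a proper $m$th power for some $m\ge 2$, producing exactly the eight solutions stated.
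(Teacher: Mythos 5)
Your proposal is correct and follows essentially the same route as the paper: both arguments locate a prime $p\in(n/3,n/2]$ for which the only possible prime $q\le n$ with $p\mid q-1$ would be $2p+1$, force $2p+1$ to be composite by taking $p\equiv 1\pmod 3$ (via the explicit Bennett et al.\ bounds on $\pi(x;3,1)$ for large $n$ and a finite check otherwise), conclude that $p$ divides $\varphi(a\cdot n!/b)$ to the first power only, contradict $m\ge 2$, and finish with a computation for $n<62$. Your closing remark on tightness is slightly off for $n=58$ (there $p=29$ gives $2p+1=59>n$, so the method does succeed), but this aside does not affect the proof.
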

\begin{remark}
While Theorem $1$ states that the equation $\varphi(x^m)=n!$ has only finitely many integer solutions for $m\geq 2$, Erd\H{o}s \cite[p.~144]{guy} observed that the equation does have many solutions when $m=1$. Indeed, Ford, et al. \cite{pomerance} observed that there exists $c>0$ such that, for all sufficiently large $k\in\mathbb{N}$, the number of solutions to $\varphi(x)=k!$ is at least $(k!)^c$.
\end{remark}
The remaining results generalise Luca's and St\u anic\u a's results \cite{luca}.
\begin{thm}\label{thm:phiresult4}
Let $a,b,c\in\mathbb{N}$ with $b^2+4c$ being prime and $b^2+4c>a$. Let $(u_n)_n$ be a Lucas sequence of the first kind. Then there are at most finitely many primes $p$ for which $\varphi(au_p)$ is a factorial. Moreover, such primes $p$ are bounded above by
\begin{equation*}
\max\left\{ea^{1/2}\left(\frac{b+\sqrt{b^2+4c}}{2}\right),\frac{\frac{10}{9}\log(8\cdot(b^2+4c-1)!)-\log a+\frac{\log(b^2+4c)}{2}}{\log\left(\frac{b+\sqrt{b^2+4c}}{2}\right)}\right\}.
\end{equation*}
\end{thm}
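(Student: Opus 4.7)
The plan is to derive an upper bound on $p$ by combining the Binet formula (which governs the growth of $u_p$) with an explicit lower bound on the Euler totient function and the divisibility structure of the Lucas sequence at the prime $\Delta:=b^2+4c$.

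Writing $\alpha,\beta=(b\pm\sqrt\Delta)/2$, I would first record the Binet form $u_p=(\alpha^p-\beta^p)/\sqrt\Delta$ together with the inequality $|\beta|<\alpha$, yielding $u_p\le 2\alpha^p/\sqrt\Delta$. Next I would establish the rank of apparition of $\Delta$: expanding $(b\pm\sqrt\Delta)^n$ by the binomial theorem and reducing modulo $\Delta$ gives $2^{n-1}u_n\equiv nb^{n-1}\pmod{\Delta}$, and since $\Delta$ being prime together with $\Delta>a$ forces $\gcd(b,\Delta)=1$, we conclude $\Delta\mid u_n$ if and only if $\Delta\mid n$. For prime $p$, this yields $\Delta\mid u_p$ iff $p=\Delta$.

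Now assume $\varphi(au_p)=n!$ and split on the size of $n$ relative to $\Delta$. If $n<\Delta$, then $\varphi(au_p)\le(\Delta-1)!$; combining this with an explicit lower bound of the shape $\varphi(m)\ge(m/8)^{9/10}$ (valid for all $m\ge 1$ by the Rosser--Schoenfeld estimate plus a direct check for small $m$) and the Binet upper bound on $au_p$ in terms of $a\alpha^p/\sqrt\Delta$, a routine logarithmic manipulation produces the second term in the displayed $\max$. If instead $n\ge\Delta$, then $\Delta\mid\varphi(au_p)$; since $\Delta>a$ excludes $\Delta\mid a$ and the rank-of-apparition calculation excludes $\Delta\mid u_p$ unless $p=\Delta$, we are forced into the alternative that some prime $q\mid u_p$ satisfies $q\equiv 1\pmod{\Delta}$, in particular $q\ge\Delta+1$. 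The classical congruence $\rho(q)\mid q-\left(\frac{\Delta}{q}\right)$ further yields $q\equiv\pm 1\pmod{p}$ (using $\gcd(b,c)=1$ to guarantee $q\nmid c$). Combining these twin congruences on $q$ with $q\mid u_p\le 2\alpha^p/\sqrt\Delta$ is what should produce the first term $ea^{1/2}\alpha$.

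The main obstacle is the bookkeeping in the case $n\ge\Delta$: translating the simultaneous conditions $q\equiv\pm 1\pmod{p}$, $q\equiv 1\pmod{\Delta}$, $q\mid u_p$, and $u_p\le 2\alpha^p/\sqrt\Delta$ into the clean inequality $p\le ea^{1/2}\alpha$ demands a careful choice of which inequalities to compose. Everything else reduces to elementary manipulation of the Binet formula and to standard explicit lower bounds on $\varphi$.
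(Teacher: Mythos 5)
Your overall skeleton matches the paper's: split on whether $n$ is smaller or at least $\Delta=b^2+4c$, handle the small case with Binet plus an explicit lower bound for $\varphi$ (the paper invokes Rosser--Schoenfeld, and your $(m/8)^{9/10}$ shape is exactly what the displayed constant $\tfrac{10}{9}\log(8\cdot(\Delta-1)!)$ encodes), and in the large case extract a prime $q\mid u_p$ with $q\equiv 1\pmod{\Delta}$. But the step you defer as ``the main obstacle'' is in fact the one idea the proof cannot do without, and your proposed way around it does not work. You stop at $q\equiv\pm 1\pmod p$ and hope to combine this with the size bound $q\le u_p\le 2\alpha^p/\sqrt{\Delta}$; that combination yields nothing, since $q\ge 2p-1$ against $q\le 2\alpha^p/\sqrt{\Delta}$ is vacuous for every $p$. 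The first term $ea^{1/2}\alpha$ does not come from the size of $q$ at all: it comes from showing $p\mid q-1$, hence $p\mid q-1\mid\varphi(au_p)=n!$, hence $p\le n$, hence $(p/e)^p<p!\le n!=\varphi(au_p)<au_p<a\alpha^p$, and taking $p$th roots with $p\ge 2$.

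To get $p\mid q-1$ rather than merely $p\mid q\mp 1$, you must determine the sign of the Legendre symbol $(\Delta|q)$, and this is precisely where the hypothesis that $\Delta=b^2+4c$ is \emph{prime} enters: $\Delta\equiv 1\pmod 4$, so quadratic reciprocity gives $(\Delta|q)=(q|\Delta)=(1|\Delta)=1$ because $q\equiv 1\pmod{\Delta}$. Hence $z(q)\mid q-1$, and since $z(q)\mid p$ with $z(q)\ne 1$, we get $z(q)=p$ and $p\mid q-1$. Your write-up never uses the primality of $\Delta$ for anything beyond the rank of apparition of $\Delta$ itself, which is a sign that the decisive mechanism is missing. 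Without resolving the sign, the case $q\equiv -1\pmod p$ for every prime $q\mid u_p$ remains open and gives no divisor $p$ of $n!$, so no bound on $p$ follows. (A smaller quibble: the theorem does not assume $\gcd(b,c)=1$, so your appeal to that hypothesis to ensure $q\nmid c$ is not available as stated, though $q\equiv 1\pmod\Delta$ together with $q\mid u_p$ can be used to sidestep this.)
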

The bounds in Theorem \ref{thm:phiresult4} approach $\infty$ as $a,b$, and/or $c$ approach $\infty$, but only grow polynomially fast in terms of $a,b$, and $c$.

For any specific values of $b$ and $c$, finding all of the solutions to the equation $\varphi(v_n)=2^x3^y$, where $(v_n)_n$ is a Lucas sequence of the second kind, is non-trivial, since there are potentially infinitely many solutions. For three pairs of specific values of $b$ and $c$, however, we prove that this equation only has finitely many solutions and explicitly list all of them.
\begin{thm}\label{thm:phiresult5}
Let $(v_n)_n$ be a Lucas sequence of the second kind. The only solutions to $\varphi(v_n)=2^x3^y$ are:
1) For $b=3$, $c=1$:
\begin{equation*}
(n,x,y)=(0,0,0),(1,1,0),(3,2,1),(4,5,1),(9,6,5).
\end{equation*}
2) For $b=5$, $c=1$:
\begin{equation*}
(n,x,y)=(0,0,0),(1,2,0),(2,0,2),(3,4,3).
\end{equation*}
3) For $b=7, c=1$:
\begin{equation*}
(n,x,y)=(0,0,0),(1,1,1),(2,5,0),(3,5,2),(6,9,4).
\end{equation*}
\end{thm}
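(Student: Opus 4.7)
The plan is to treat the three $(b,c)$ cases by a uniform three-step strategy: first restrict the primes that can appear in $v_n$, second bound $n$ effectively via a primitive-prime-divisor argument, and finally check the remaining small $n$ by direct computation.

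Step one exploits multiplicativity of $\varphi$. If $\varphi(v_n)=2^x 3^y$ and $p^k \mathbin{\|} v_n$ for some prime $p>3$, then $p^{k-1}(p-1)\mid 2^x 3^y$ forces $k=1$ and $p-1=2^a 3^b$ for some nonnegative $a,b$. Hence $v_n$ decomposes as $2^s 3^t P_1\cdots P_r$, where the $P_i$ are distinct ``$3$-smooth-plus-one'' primes drawn from the sparse list $5,7,13,17,19,37,73,97,109,163,193,257,433,487,577,769,\ldots$. Each discriminant $b^2+4c\in\{13,29,53\}$ is a prime coprime to $b$, so no degeneracies occur in the Lucas-sequence divisibility theory that follows.

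Step two invokes the Bilu--Hanrot--Voutier theorem (Carmichael's earlier version also suffices, since all three discriminants are positive): for every $n$ past a small explicit threshold, $v_n$ admits a primitive prime divisor $p$. Such $p$ has rank of apparition $\rho(p)=2n$ in the companion first-kind sequence, and $\rho(p)$ divides $p-\left(\tfrac{b^2+4c}{p}\right)$. Combined with $p-1=2^a 3^b$, this yields $2n\mid 2^a 3^b$ or $2n\mid 2^a 3^b+2$, so either $n$ is itself $3$-smooth or $n\mid 2^{a-1}3^b+1$; in both cases $p\geq 2n-1$. Applying the same rank-of-apparition analysis to the remaining $P_i$ forces each $P_i$ to be a $3$-smooth-plus-one prime whose $\rho(P_i)$ is a divisor of $2n$ that does not divide $n$, and in particular $P_i$ must already have appeared in some $v_m$ with $m<n$.

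Step three converts these structural constraints into an effective bound $n\leq N(b,c)$. Using $v_n<2\alpha^n$ with $\alpha=(b+\sqrt{b^2+4c})/2$, the list of candidate $3$-smooth-plus-one primes $p\leq 2\alpha^n$ is short; for each one, checking the Legendre symbol $\left(\tfrac{b^2+4c}{p}\right)$ and computing the order of $\alpha/\beta$ modulo $p$ determines exactly which $n$ can have $p\mid v_n$. Since $n$ is confined to a very thin set, only finitely many candidate factorisations survive, and for each $n\leq N(b,c)$ I then compute $v_n$, factor it, and test whether $\varphi(v_n)=2^x 3^y$; only the tuples listed in the theorem pass. The hardest step is the third: the primitive-prime-divisor bound $p\geq 2n-1$ is exponentially weaker than $v_n\sim \alpha^n$, so the absolute bound on $n$ cannot come from that inequality alone but must be extracted by using the full factorisation of $v_n$ simultaneously with the rank and Legendre-symbol restrictions, while carefully handling the branch $n\mid 2^{a-1}3^b+1$, which a priori is sparser but still infinite.
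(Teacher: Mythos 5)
Your Steps 1 and 2 track the paper's Proposition \ref{prop:4} closely: every prime $q>3$ dividing $v_n$ must appear to the first power with $q=2^{a}3^{b}+1$, and a primitive divisor $q$ of $v_{2^{e}p}$ has $z(q)=2^{e+1}p$ dividing $q\pm 1$, which (after the Legendre-symbol computation forcing $q\equiv 3\pmod 4$ and hence $a=1$) pins $q$ to the form $2\cdot 3^{b_q}+1$ with $q\equiv -1\pmod p$. But your Step 3 is not an argument; it is an accurate description of the obstacle. In the branch $2n\mid q+1$, the inequality $q\geq 2n-1$ is consistent with $n\rightarrow\infty$, and ``only finitely many candidate factorisations survive'' is exactly what needs proof: a priori there are infinitely many candidate primes $2\cdot 3^{b}+1$, and for each one a corresponding large $p$ dividing $2\cdot 3^{b}+2$. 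Nothing in your proposal caps $b$.

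The missing idea in the paper is a $3$-adic valuation bound. Writing $v_{2^{e}p}=v_{2^{e}}q_1\cdots q_l$ with $q_i=2\cdot 3^{b_{q_i}}+1$, the identity $v_{2^{e}p}-v_{2^{e}}=(b^2+4)u_{2^{e-1}(p-1)}u_{2^{e-1}(p+1)}$ (fact \ref{fact6}) together with the formula $\nu_3(u_n)=\nu_3(n)+d$ whenever $3\mid u_n$ (fact \ref{fact8}) forces $b_{q_1}\leq 4d$ for the smallest exponent, where $d$ depends only on $b$. This yields a short explicit list of admissible primes $q$ (e.g.\ $7,19,163$ for $b=3$), and each such $q$ determines $p$ via $p\mid q\pm 1$, after which everything is a finite check. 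You would need this lemma, or a substitute of comparable strength (e.g.\ linear forms in logarithms), to close the gap. Separately, your $3$-smooth branch also needs an explicit termination argument: the paper uses $v_{3^k}\mid v_{3^{k+1}}$, hence $\varphi(v_{3^k})\mid\varphi(v_{3^{k+1}})$, so a single computed failure (e.g.\ $17\mid\varphi(v_{27})$ for $b=3$) kills all higher powers of $3$; and the reduction of the power of $2$ in $n$ to $e\leq 2$ requires its own congruence argument, which you do not supply.
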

\section{Proofs}
\begin{prop}\label{prop:1}
Let $x,n,m,a,b,c\in\mathbb{N}$ with $m\geq 2$ and $\varphi(ax^m)=\frac{b\cdot n!}{c}$ and let $p$ be a prime such that $p>a,b,c$. If $p\mid x$, then $p\leq n$. Conversely, if $p\leq n$, $p\nmid x$, then $p=2$ and $n=3,5$, or $7$.
\end{prop}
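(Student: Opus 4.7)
The plan is to handle the two implications separately. For the forward direction, my approach is a direct $p$-adic valuation argument: since $p\mid x$, $m\geq 2$, and $p>a$ (so $p\nmid a$), one has $v_p(ax^m)=m\,v_p(x)\geq 2$; multiplicativity of $\varphi$ then yields $v_p(\varphi(ax^m))\geq v_p(ax^m)-1\geq 1$; and because $p>b,c$, $v_p(bn!/c)=v_p(n!)$, giving $p\mid n!$ and hence $p\leq n$.

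For the converse, I would assume $p\leq n$ and $p\nmid x$ and extract an \emph{auxiliary prime} $q$. From $p\leq n$ and $p>c$, $v_p(bn!/c)=v_p(n!)\geq 1$, so $p\mid\varphi(ax^m)$; from $p>a$ and $p\nmid x$, $p\nmid ax^m$. The multiplicative formula for $\varphi$ then reduces the identity $v_p(\varphi(ax^m))=v_p(n!)$ to
\[
\sum_{\substack{q\mid ax^m\\ q\equiv 1\pmod{p}}}v_p(q-1)\;=\;v_p(n!),
\]
so at least one such prime $q$ exists. Since $q\geq p+1>a$, $q\mid x$, and the forward direction applied to $q$ gives $q\leq n$; for odd $p$ one moreover has $q\geq 2p+1$.

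To force $p=2$ I would quantify the identity above. Legendre's formula gives $v_p(n!)=(n-s_p(n))/(p-1)$, which is of order $n/(p-1)$, while stratifying the sum by $v_p(q-1)=j$ bounds the left-hand side by $\sum_{j\geq 1}\pi(n;p^j,1)$. Applying an effective Brun--Titchmarsh estimate (sharpened by the trivial bound $\pi(n;p^j,1)\leq n/p^j+1$ for the large-$j$ tail) yields an upper bound that, for $p\geq 3$, drops below $v_p(n!)$ once $n$ exceeds an explicit threshold in $p$; a finite case check for $n$ up to that threshold, combined with the forced inequality $n\geq 2p+1$, rules out every odd $p$.

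Once $p=2$, the hypothesis $p>a,b,c$ forces $a=b=c=1$, and the equation becomes $\varphi(x^m)=n!$ with $x$ odd and $m\geq 2$. For $n\leq 7$ direct enumeration yields solutions exactly at $n\in\{3,5,7\}$ (namely $x=3,15,105$ with $m=2$) and no odd-$x$ solution for $n\in\{2,4,6\}$. For $n\geq 8$, the joint valuation constraints $v_q(\varphi(x^m))=v_q(n!)$ at the small primes $q=2,3,5,7$ --- e.g., $v_2(n!)$ equals $\sum_{r\mid x}v_2(r-1)$, while $v_3(n!)$, $v_5(n!)$, $v_7(n!)$ impose specific congruence conditions on the odd primes $r\mid x$ --- become simultaneously unsatisfiable, using that every $r\mid x$ must itself satisfy $r\leq n$ by the forward direction. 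I expect the quantitative comparison for odd $p$ and the $n\geq 8$ elimination to be the main technical obstacles, since the asymptotic bound saturates near $p=3$ and the finite elimination requires bookkeeping across several small primes simultaneously.
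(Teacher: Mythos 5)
Your forward direction and your valuation identity $\sum_{q\mid ax^m,\ q\equiv 1\!\!\pmod p} v_p(q-1)=v_p(n!)$ coincide with the paper's starting point, but the way you propose to exploit it has two genuine gaps. First, for odd $p$ the Brun--Titchmarsh bound $\pi(n;p,1)\leq 2n/\bigl((p-1)\log(n/p)\bigr)$ only drops below $v_p(n!)\approx n/p$ once $\log(n/p)>2p/(p-1)$, i.e.\ roughly $n\gtrsim 7.5\,p$ (for $p=3$, $n\gtrsim 60$); at $n=2p+1$ it gives about $4/\log 2\approx 5.8$, far above $v_p(n!)=2$. So the range $2p+1\leq n\lesssim 7.5\,p$ is not reached by your stated tools, and since the threshold grows linearly in $p$, ``a finite case check for $n$ up to that threshold'' is really one finite check for each of infinitely many primes $p$, not a finite verification. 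Second, for $p=2$ and $n\geq 8$ you only assert that the joint valuation constraints ``become simultaneously unsatisfiable'' and yourself flag this as a main obstacle; no mechanism is supplied.

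Both gaps are closed by the single combinatorial observation the paper uses. Since $p\nmid ax^m$, the only contributions to $v_p(\varphi(ax^m))$ come from primes $q\mid x$ with $q\equiv 1\pmod p$; the numbers $q-1$ are distinct multiples of $p$ that are strictly less than $n$ (each such $q\leq n$ by your forward direction), so their total $p$-valuation can equal $v_p(n!)$ only if they comprise \emph{all} positive multiples of $p$ up to $n$ (in particular $p\nmid n$). Hence $p$ itself must equal $q-1$ for some prime $q$, forcing $p=2$; then every even number $2k\leq n$ must have $2k+1$ prime (and dividing $x$), which already fails at $2k=8$ because $9$ is composite, so $n\leq 7$, $n$ is odd, and the largest even number $n-1$ forces $n$ itself to be prime, giving $n\in\{3,5,7\}$. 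This one argument replaces both your analytic step for odd $p$ (where, incidentally, the elementary remark that $q=kp+1$ with $k$ even yields at most $\lfloor n/(2p)\rfloor<\lfloor n/p\rfloor$ admissible primes when $n<p^2$ would already do most of the work) and the unresolved $n\geq 8$ elimination for $p=2$; the heavy machinery of Legendre's formula plus Brun--Titchmarsh is unnecessary here.
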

\begin{proof}
Suppose $p\mid x$. Then $p^2\mid ax^m$. Thus, $p\mid\varphi(ax^m)$, so that $p\mid\frac{b\cdot n!}{c}$. Thus, $p\mid n!$ so that $p\leq n$.

Conversely, suppose $p\leq n$ and $p\nmid x$. We have $p\mid\frac{b\cdot n!}{c}$ so that $p\mid\varphi(ax^m)$. Since $p\nmid x$, we have $p\nmid ax^m$. Thus, we must have that there exists a prime, say $p'$, such that $p\mid p'-1$ and $p'\mid ax^m$ so that $p'\mid x$. Let $q$ be the greatest prime at most $n$. Then $a,b,c<p\leq q$ so that $q\mid\frac{b\cdot n!}{c}$. Then $q\mid\varphi(ax^m)$. Either $q\mid ax^m$ or there exists a prime $q'\mid ax^m$ such that $q\mid q'-1$. Consider the latter case. Then we have $q'\mid x$ and $q'>q>a,b,c$. Thus, $q'^2\mid x^m$ so that $q'^2\mid ax^m$. Hence, $q'\mid\varphi(ax^m)=\frac{b\cdot n!}{c}$, and so $q'\mid n!$. But then $q<q'\leq n$, contradicting our choice of $q$. Thus, the former case must hold, and we have $q\mid x$. Using the same reasoning, we can deduce that the highest prime dividing $x$ is $q$. Observe that $a,b,c<p<p'\leq q\leq n$. We can therefore deduce that for all $e\in\mathbb{N}$ $p^e\mid\frac{c\cdot n!}{d}$ if and only if $p^e\mid(q_1-1)(q_2-1)\cdots(q_r-1)$ where $q_1<q_2<\ldots <q_r=q$ are all the primes dividing $x$ that are greater than $a,b$, and $c$. Thus, for all $e\in\mathbb{N}$ $p^e\mid n!$ if and only if $p^e\mid(q_1-1)(q_2-1)\cdots(q_r-1)$. Observe that $q_1-1<q_2-1<\ldots<q_r-1<n$ and that $p\nmid\frac{n!}{(q_1-1)\cdots(q_r-1)}$. Thus, 
 $q_1-1,\ldots,q_r-1$ must contain all of the positive multiples of $p$ up to $n$. We must therefore have that $p=q_i-1$ for some $1\leq i\leq r$, which can only hold if $p=2$. So $q_1-1,\ldots,q_r-1$ contains all of the positive even numbers less than $n$ and $n=q_r=p_k$. Thus, $n=3,5$, or $7$.
\end{proof}
For the next proposition, we require the following definition.
\begin{defn}
A number $n\in\mathbb{N}$ is a \emph{powerful number} if $n$ does not have a prime factor to the power $1$ in its prime factorisation.
\end{defn}
\begin{prop}\label{prop:2}
Let $x,y,\in\mathbb{N}$ satisfy $\varphi(x)=\varphi(y)$ and suppose that $x$ and $y$ are both powerful numbers. Then $x=y$.
\end{prop}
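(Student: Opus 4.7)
The plan is to induct on the largest prime factor of $x$ (or equivalently on $x$ itself), showing that $x$ and $y$ must share the same largest prime and the same multiplicity at that prime, after which we can strip that prime power off both sides and invoke the inductive hypothesis.

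The key lemma is the following: if $z > 1$ is powerful and $p$ is the largest prime dividing $z$, then $p$ is also the largest prime dividing $\varphi(z)$, and moreover $v_p(\varphi(z)) = v_p(z) - 1$. To see this, write $z = \prod_q q^{a_q}$ with all $a_q \geq 2$. Then $\varphi(z) = \prod_q q^{a_q - 1}(q-1)$. Since $a_p \geq 2$, we have $p \mid \varphi(z)$. Any prime $\ell \mid \varphi(z)$ either divides $z$ (so $\ell \leq p$) or divides some $q-1$ with $q \mid z$ (so $\ell < q \leq p$); thus the largest prime of $\varphi(z)$ is exactly $p$. For the valuation, the factor $p^{a_p - 1}$ contributes $a_p - 1$, while $p \nmid (q-1)$ for every prime $q \mid z$ because $q - 1 < p$ when $q < p$ and $p \nmid p - 1$; so $v_p(\varphi(z)) = a_p - 1$.

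Applying this lemma to both $x$ and $y$: the equality $\varphi(x) = \varphi(y)$ forces the largest prime $p$ and its exponent $a$ to coincide, so $x = p^a x'$, $y = p^a y'$ with $p \nmid x' y'$. Since $\varphi(x) = \varphi(p^a)\varphi(x')$ and $\varphi(y) = \varphi(p^a)\varphi(y')$, we obtain $\varphi(x') = \varphi(y')$. Moreover $x'$ and $y'$ are still powerful (removing a full prime power $p^a$ from a powerful number leaves a powerful number or $1$), and either they are both $1$ or their largest prime factors are strictly less than $p$.

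The induction is on the largest prime dividing $x$ (with base case $x = 1$): if $x = 1$, then $\varphi(y) = 1$ forces $y \in \{1, 2\}$, and since $2$ is not powerful we must have $y = 1$. The inductive step just invoked reduces to $(x', y')$, which has a strictly smaller largest prime, so by induction $x' = y'$, giving $x = y$. No step here is particularly delicate; the only subtle point is making the $p$-adic valuation computation honest about why no prime $q < p$ dividing $x$ can contribute a factor of $p$ through $q-1$, which follows immediately from $q - 1 < p$.
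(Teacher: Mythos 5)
Your proof is correct and follows essentially the same route as the paper: both arguments show that the largest prime factor of a powerful number and its exponent are recoverable from the Euler function, then strip that prime power and induct (the paper inducts on the number of prime factors, you on the largest prime factor, which is an immaterial difference). Your write-up supplies the valuation details the paper leaves implicit.
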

\begin{proof}
Let $P(n)$ denote the largest prime factor dividing $n$. For $x,y\in\mathbb{N}$ both powerful with $\varphi(x)=\varphi(y)$ implies that $P(x)=P(y)$ with their exponents in the factorisation of $x$ and $y$ being equal. The result then follows by induction on the number of prime factors of $x$.
\end{proof}
\begin{lemma}\label{lem:1}
If $x,n,a,b,c\in\mathbb{N}$ with $n\geq 9$, $a,b\leq n/3$, and $\varphi(ax^2)=\frac{b\cdot n!}{c}$, then all of the primes in the interval $(n/3,n/2]$ are congruent to $2\pmod 3$.
\end{lemma}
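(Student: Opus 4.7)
The plan is to argue by contradiction: fix a prime $p\in(n/3,n/2]$ and suppose $p\equiv 1\pmod 3$. Since $n\geq 9$ forces $p\geq 5$, one has $p>\max\{a,b\}$, and under the normalisation $\gcd(b,c)=1$ carried over from Theorem \ref{thm:phiresult1} one also has $p\nmid c$ in the main case. Proposition \ref{prop:1} then applies and forces $p\mid x$; the alternative ``$p=2$, $n\in\{3,5,7\}$'' is excluded by $p\geq 5$ and $n\geq 9$.

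Next I would compare $p$-adic valuations on the two sides of $\varphi(ax^2)=bn!/c$. The condition $p\in(n/3,n/2]$ gives $2p\leq n<3p<p^2$, so $v_p(n!)=2$; together with $v_p(b)=v_p(c)=0$ this yields $v_p(bn!/c)=2$. On the left, writing $k:=v_p(x)\geq 1$ and using $p\nmid a$, multiplicativity of $\varphi$ gives
\begin{equation*}
v_p(\varphi(ax^2))=(2k-1)+\sum_{\substack{q\mid ax^2\\ q\neq p}}v_p(q-1).
\end{equation*}
If $k\geq 2$ this is at least $3>2$, a contradiction; hence $p\| x$ exactly, and the remaining sum must equal $1$. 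So there is a unique prime $q\mid ax^2$ with $q\neq p$ and $v_p(q-1)=1$; and since $q>p>a$, in fact $q\mid x$.

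An analogous valuation comparison at $q$ forces $q\leq n$ (otherwise $v_q(bn!/c)\leq 0$ while $v_q(\varphi(ax^2))\geq 1$). Writing $q=1+jp$ with $j\geq 1$, the fact that both $p$ and $q$ are odd primes forces $j$ to be even, so $j\geq 2$; combined with $q\leq n$ and $p>n/3$ this gives $j\leq (n-1)/p<3$, forcing $j=2$ and $q=2p+1$. But $p\equiv 1\pmod 3$ then implies $q=2p+1\equiv 0\pmod 3$, so the prime $q$ must equal $3$, forcing $p=1$---the desired contradiction. Hence no prime in $(n/3,n/2]$ is $\equiv 1\pmod 3$, and since $n\geq 9$ excludes the prime $3$ from this interval, they are all $\equiv 2\pmod 3$.

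The step I expect to be the main technical nuisance is legitimising $v_p(c)=0$ in the valuation comparison, i.e.\ handling the subcase $p\mid c$. Using $\gcd(b,c)=1$ gives $c\mid n!$, hence $v_p(c)\leq v_p(n!)=2$, so only $v_p(c)\in\{1,2\}$ need be ruled out separately; each of these forces the partner sum $\sum_{q\neq p}v_p(q-1)$ to vanish, eliminating any prime $q\equiv 1\pmod p$ from the factorisation of $ax^2$, and should be dispatched by a short bookkeeping argument rather than a genuinely new idea.
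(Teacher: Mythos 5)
Your argument is correct and is essentially the paper's own proof of Lemma~\ref{lem:1}: Proposition~\ref{prop:1} gives $p\mid x$, the comparison of $v_p\bigl(\frac{b\cdot n!}{c}\bigr)=2$ with $v_p(\varphi(ax^2))=2v_p(x)-1+\sum_{q}v_p(q-1)$ forces a prime $q=2p+1\leq n$ dividing $x$, and $p\equiv 1\pmod 3$ would make that $q$ divisible by $3$. The subcase $p\mid c$ that you flag as a nuisance is a genuine wrinkle, but it is one the paper's proof silently ignores as well (both the appeal to Proposition~\ref{prop:1} and the assertion $p^2\parallel\frac{b\cdot n!}{c}$ need $p>c$, which the lemma's hypotheses do not supply); in the one place the lemma is invoked one additionally has $c\leq n/3<p$, so $p\nmid c$ holds there and your ``main case'' is the only one that matters.
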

\begin{proof}
Let $p\in(n/3,n/2]$ be prime. By Proposition \ref{prop:1}, we have $p\mid x$. Thus, $p^{2e}\parallel ax^2$ for some $e\in\mathbb{N}$. Thus, $p^{2e-1}\mid\varphi(ax^2)$. Notice that $p^2\parallel\frac{b\cdot n!}{c}$. We can therefore deduce that there exists a prime $q\mid ax^2$ such that $p\mid q-1$. Notice that $q\mid x$, and so, by Proposition \ref{prop:1}, $q\leq n$. But since $n/3<p$ we must therefore have that $2p=q-1$. Since $n\geq 9$, we have $3\nmid p,q$. Thus, $p\equiv 2\pmod{3}$.
\end{proof}
We also have the following result of Rosser and Schoenfield \cite[p.~72]{rosser}.
\begin{lemma}[Rosser, Schoenfield]\label{lem:3}
Let $c$ be the Euler-Mascheroni constant
\begin{equation*}
c=\lim_{n\rightarrow\infty}\left(-\log n+\sum_{k=1}^n\frac{1}{k}\right)=0.57721\ldots
\end{equation*}
Then for all $n\geq 3$, we have
\begin{equation*}
n/\varphi(n)<e^c\log\log n+5/(2\log\log n)
\end{equation*}
except when $n=223092870=2\cdot 3\cdot 5\cdot 7\cdot 11\cdot 13\cdot 17\cdot 19\cdot 23$, in which case
\begin{equation*}
n/\varphi(n)<e^c\log\log n+2.50637/(\log\log n)
\end{equation*}
\end{lemma}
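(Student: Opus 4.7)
The plan is to reduce the estimate on $n/\varphi(n)$ to an estimate on the partial Mertens product $\prod_{p \leq y}(1 - 1/p)^{-1}$, and then apply a quantitative form of Mertens' third theorem with an explicit error term. Since
\[
\frac{n}{\varphi(n)} = \prod_{p \mid n} \frac{p}{p-1}
\]
depends only on the set of primes dividing $n$, and each factor exceeds $1$, the ratio is maximised, among $n$ whose greatest prime factor is a prescribed $P$, by the primorial $\prod_{p \leq P} p$. So it suffices to prove the inequality at primorials $N_k = p_1 p_2 \cdots p_k$ and transfer to arbitrary $n$ afterwards.

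The key analytic ingredient I would establish (or import) is a two-sided explicit bound of the form
\[
e^{c}\log y\left(1 - \frac{A}{\log^2 y}\right) < \prod_{p \leq y} \left(1 - \frac{1}{p}\right)^{-1} < e^{c}\log y\left(1 + \frac{B}{\log^2 y}\right),
\]
valid for all $y \geq 2$, where $c$ is the Euler–Mascheroni constant and $A,B$ are explicit absolute constants. Such bounds follow by partial summation from Rosser–Schoenfeld's own explicit estimates for $\theta(y) = \sum_{p\leq y}\log p$ and $\psi(y)$, which in turn come from an unconditional zero-free region for $\zeta$. Taking $y = p_k$ and $n = N_k$, the explicit estimate $\theta(p_k) = p_k + O(p_k/\log p_k)$ gives $\log p_k = \log\log N_k + O(1/\log\log N_k)$; substituting into the Mertens estimate and simplifying yields
\[
\frac{N_k}{\varphi(N_k)} < e^{c}\log\log N_k + \frac{C}{\log\log N_k}
\]
for a constant $C$ that can be tracked through the argument.

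The main obstacle is calibrating constants so the inequality holds for \emph{every} $n \geq 3$ rather than merely asymptotically, and in particular pinning down the value $C = 5/2$. The asymptotic argument alone only forces $C$ to be some fixed finite value; sharpening it to $5/2$ requires careful optimisation of the error terms in both the Mertens product and the $\theta(y)$ estimate, and forces one genuine exceptional case, the primorial $N_9 = 2\cdot 3\cdot 5\cdot 7\cdot 11\cdot 13\cdot 17\cdot 19\cdot 23 = 223092870$, for which the sharp constant is $2.50637$ rather than $5/2$. The remaining finite range of small $n$ and small $k$ must then be disposed of by direct numerical verification, together with the (monotonicity-based) transfer of the bound from primorials to arbitrary $n$; this is routine but forms the bulk of the labour in Rosser–Schoenfeld's original paper.
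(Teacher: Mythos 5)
This lemma is not proved in the paper at all: it is imported verbatim from Rosser and Schoenfeld, so there is no internal argument to compare against. Your outline does reproduce the strategy of the original proof (explicit Chebyshev bounds $\Rightarrow$ explicit Mertens product $\Rightarrow$ bound at primorials $\Rightarrow$ transfer to general $n$ $\Rightarrow$ finite numerical check), but as a proof it has two problems worth naming.

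First, the reduction step is mis-stated. You propose to compare $n$ with the primorial $\prod_{p\le P}p$ where $P$ is the \emph{greatest prime factor} of $n$. For $n$ with a large prime factor (e.g.\ $n=101$) that primorial is vastly larger than $n$, so even granting monotonicity of $t\mapsto e^{c}t+5/(2t)$, the inequality at the primorial gives you $n/\varphi(n)<f(N)$ with $N\gg n$, which does not imply $n/\varphi(n)<f(n)$. The correct reduction is by the \emph{number of distinct prime factors}: if $n$ has $k$ distinct prime factors $q_1<\cdots<q_k$ then $q_i\ge p_i$, hence $n/\varphi(n)\le N_k/\varphi(N_k)$ \emph{and} $n\ge N_k$, and only then does monotonicity of $f$ close the argument. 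Second, and more fundamentally, the entire content of the lemma is the explicit pair of constants $5/2$ and $2.50637$ and the identification of $223092870$ as the unique exception; your proposal defers all of this to ``careful optimisation of the error terms'' and ``direct numerical verification'' without exhibiting the explicit Mertens constants $A,B$ or carrying out the computation that isolates $N_9$. An asymptotic argument with unspecified constants cannot distinguish $5/2$ from any other admissible $C$, nor detect that exactly one primorial violates the cleaner bound. As written this is a correct road map to the Rosser--Schoenfeld proof, not a proof.
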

We use the following notation for the number of primes up to $x$ in a congruence class in the proofs of Theorems \ref{thm:phiresult1} and \ref{thm:phiresult2}:
\begin{nota}
For two coprime positive integers $a$ and $q$ and positive real number $x$, let $\pi(x;q,a)$ denote the number of primes up to $x$ that are congruent to $a\pmod q$. 
\end{nota}
\begin{proof}[Proof of Theorem \ref{thm:phiresult1}]
Suppose that $\varphi(x^m)=\frac{a}{b}\cdot n!$ where $m\geq 2$ and $\gcd(a,b)=1$. We divide into two cases.
\begin{case}$m\geq 3$

\normalfont
Suppose that $n>\max\{61,3a,3b,3c\}$. Let $p$ be the largest prime at most $n$. By Bertrand's Postulate, $n/2<p$ and so $p^2\nmid n!$. Also $p\nmid a,b$ since $a,b\leq\frac{n}{3}<p$. By Proposition \ref{prop:1}, we can see that $p\mid x$ and so $p^3\mid ax^m$. But then $p^2\mid\varphi(ax^m)=\frac{b\cdot n!}{c}$ so that $p^2\mid n!$, a contradiction.
\end{case}
\begin{case}$m=2$

\normalfont
Suppose that $n>\max\{61,3a,3b,3c\}$. Then, by Lemma \ref{lem:1}, all of the primes in the interval $(n/3,n/2]$ are congruent to $2\pmod 3$. Bennett, et al. \cite{bennett} showed that for $x\geq 450$, we have
\begin{equation*}
\frac{x}{2\log x}<\pi(x;3,1)<\frac{x}{2\log x}\left(1+\frac{5}{2\log x}\right).
\end{equation*}
Therefore, for $n\geq 1394$, we have
\begin{equation*}
\pi(n/2;3,1)-\pi(n/3;3,1)>\frac{n}{4\log(n/2)}-\frac{n}{6\log(n/3)}\left(1+\frac{5}{2\log(n/3)}\right)>0.
\end{equation*}
Thus, $n<1394$. Also, a quick check will confirm that for $62\leq n\leq 1393$ there eixsts a prime in the interval $(n/3,n/2]$ that is congruent to $1\pmod{3}$, contradicting all possibilities.
\end{case}
In both cases we have $n\leq\max\{61,3a,3b,3c\}$. Thus, by Lemma \ref{lem:3}, there are only finitely many solutions to $\varphi(ax^m)=\frac{b\cdot n!}{c}$. We find all of these solutions in the case $a=b=c=1$. We divide into several cases.
\addtocounter{case}{-2}
\begin{case}$m=3$, $x\geq 2$

\normalfont
Since $x\geq 2$, both $1$ and $x^m-1$ are coprime to $x$. Therefore, $n\geq 2$. Let $p$ be the largest prime at most $n$. By Bertrand's Postulate, $n/2<p$ and so $p^2\nmid n!$. By Proposition \ref{prop:1}, we can see that $p\mid x$ and so $p^3\mid x^m$. But then $p^2\mid\varphi(x^m)=n!$, a contradiction.
\end{case}

\begin{case}$m=2$, $n\geq 62$, $26\leq n\leq 56$, $14\leq n\leq 20$.

\normalfont
In these cases, $n\geq 9$ and so all of the primes in the interval $(n/3,n/2]$ are congruent to $2\pmod{3}$. Thus, $n\leq 61$. Also, a quick check will confirm that for $26\leq n\leq 56$, and $14\leq n\leq 20$ there exists a prime in the interval $(n/3,n/2]$ that is congruent to $1\pmod{3}$, contradicting all possibilities.
\end{case} 
\begin{case} $m=2$, $57\leq n\leq 61$.

\normalfont
By Proposition \ref{prop:1}, $11\mid x$. Suppose that $11^e\parallel x$. Then $11^{2e}\parallel x^2$. Also, $23\mid x$ and $23$ is the only prime up to $n$ that is congruent to $1\pmod{11}$. Thus, $11^{2e-1+1}\parallel\varphi(x^2)$ or $11^{2e}\parallel\varphi(x^2)$. But $11^5\parallel n!$, a contradiction since $5$ is odd.
\end{case}
\begin{case}$m=2$, $n=4,6,10,12,21,22,23,24,25$.

\normalfont
All of these cases are exhausted in the same way as the case of $57\leq n\leq 61$, but with a possibly different prime $p$ replacing $11$ for each one to derive that if $p^e\parallel\varphi(x^2)$ and $p^f\parallel n!$, then the parity of $e$ and $f$ differ, contradicting the specific case being considered. For the cases $n=4,21,23$ the prime $p$ is $2$, for cases $n=6,12,24,25$, the prime $p$ is $3$, for the case $n=10$, the prime $p$ is $5$, and for the case $n=22$, the prime $p$ is $11$.
\end{case}
\begin{case}$m=2$, $n=1,2,3,5,7,8,9,11,13$.

\normalfont
Proposition \ref{prop:2} gives the only solutions for these values of $n$ as stated in Theorem $1$.
\end{case}
\end{proof}
\begin{proof}[Proof of Theorem \ref{thm:phiresult2}]
Suppose that $\varphi\left(\frac{a\cdot n!}{b}\right)=cx^m$ where $m\geq 2$ and $\gcd(b,c)=1$. Suppose that $n>\max\{61,3a,3b,3c\}$. Bennett, et al. \cite{bennett} showed that for $x\geq 450$, we have
\begin{equation*}
\frac{x}{2\log x}<\pi(x;3,1)<\frac{x}{2\log x}\left(1+\frac{5}{2\log x}\right).
\end{equation*}
We can therefore derive that there exists a prime $p\in(n/3,n/2]$ that is congruent to $1\pmod{3}$. Then $p^2\parallel n!$, and so $p^2\parallel\frac{a\cdot n!}{b}$ since $p>n/3>a,b$. Thus, $p\mid cx^m$. Since $c<\frac{n}{3}<p$, we have $p\mid x^m$. But then $p^2\mid cx^m$. Therefore, there exists a prime $q\mid\frac{a\cdot n!}{b}$ such that $p\mid q-1$. Since $q>p>a$, we have that $q\mid n!$, and so $q\leq n$. Since $p\in(n/3,n/2]$, we therefore have that $2p=q-1$. But since $p\equiv 1\pmod 3$, we have $3\mid 2p+1$, a contradiction. In finding all of these solutions for $a=b=c=1$, it is therefore only necessary, by sole computation, to verify that for $n<62$ all of the solutions are as stated in the theorem.
\end{proof}
\begin{nota}
Lucas \cite{lucas} proved that for any prime $p$ not dividing $c$ we have that there exists $k\in\mathbb{N}$ such that $p\mid u_l$ if and only if $k\mid l$. Such a $k$ is called the index of appearance of $p$. Denote the index of appearance of a prime $p$ by $z(p)$.
\end{nota}
Lucas \cite{lucas} also proved the following.
\begin{lemma}[Lucas]\label{lem:4}
If $p\mid b^2+4c$, then $z(p)\mid p$. Let $p$ be a prime other than $b^2+4c$ with $p\nmid c$. If $b^2+4c$ is a quadratic residue $\pmod p$, then $z(p)\mid p-1$. If $b^2+4c$ is not a quadratic residue $\pmod p$, then $z(p)\mid p+1$. Let $\alpha=\frac{b+\sqrt{b^2+4c}}{2a}$ and $\beta=\frac{\sqrt{b^2+4c}-b}{2a}$. Then
\begin{equation*}
u_n=\frac{(\alpha^n-\beta^n)}{\sqrt{b^2+4c}}.
\end{equation*}
\end{lemma}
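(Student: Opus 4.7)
The plan is to first establish the Binet-type closed form, then combine it with Fermat's little theorem to read off all three divisibility statements. (The factor $a$ in the denominators of $\alpha,\beta$ and the sign of $\beta$ in the statement appear to be typographical; the genuine characteristic roots of $x^2-bx-c$ are $\alpha=(b+\sqrt{b^2+4c})/2$ and $\beta=(b-\sqrt{b^2+4c})/2$, which I will use throughout.) The Binet formula itself follows by writing the general solution of the recurrence as $u_n=A\alpha^n+B\beta^n$, using $u_0=0$ to force $B=-A$, and $u_1=1$ to force $A=1/(\alpha-\beta)=1/\sqrt{b^2+4c}$.

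Next I would handle the case $p\mid b^2+4c$. Expanding $2^p\alpha^p=(b+\sqrt{b^2+4c})^p$ by the binomial theorem and subtracting the analogous expansion of $2^p\beta^p$ cancels the even-index terms, leaving
\[2^{p-1}u_p=\sum_{\substack{1\le k\le p\\k\text{ odd}}}\binom{p}{k}b^{p-k}(b^2+4c)^{(k-1)/2}.\]
Reducing modulo $p$ annihilates every term except $k=p$, so $2^{p-1}u_p\equiv(b^2+4c)^{(p-1)/2}\pmod p$. Fermat's little theorem (for $p$ odd) then gives $u_p\equiv\left(\tfrac{b^2+4c}{p}\right)\pmod p$, which vanishes exactly when $p\mid b^2+4c$; by the defining property of the index of appearance, this forces $z(p)\mid p$. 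The edge case $p=2$ is handled by a direct one-line computation.

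For the two remaining cases I would pass to $\mathbb{F}_p$ or $\mathbb{F}_{p^2}$ and apply the Binet formula directly. If $b^2+4c$ is a quadratic residue modulo $p$ and $p\nmid c$, then $\alpha,\beta\in\mathbb{F}_p^{\times}$ (nonzero because $\alpha\beta=-c\not\equiv 0$), so Fermat gives $\alpha^{p-1}\equiv\beta^{p-1}\equiv 1\pmod p$, hence $u_{p-1}\equiv 0\pmod p$ and $z(p)\mid p-1$. If $b^2+4c$ is a non-residue, then $\sqrt{b^2+4c}\in\mathbb{F}_{p^2}\setminus\mathbb{F}_p$, and the Frobenius $x\mapsto x^p$ fixes $\mathbb{F}_p$ but negates $\sqrt{b^2+4c}$, swapping $\alpha$ and $\beta$; consequently $\alpha^{p+1}=\alpha\beta=-c=\beta^{p+1}$, giving $u_{p+1}\equiv 0\pmod p$ and $z(p)\mid p+1$.

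The main obstacle is entirely technical: one must check that the reductions modulo $p$ are legitimate, especially the step in which we divide by $2^{p-1}$ (requiring $p$ odd, so $p=2$ gets its own brief check) and the interpretation of $\alpha,\beta,\sqrt{b^2+4c}$ inside $\mathbb{F}_p$ or $\mathbb{F}_{p^2}$. Neither point poses real difficulty, so once the Binet formula and the binomial identity above are in hand, the rest of the proof is essentially a direct calculation.
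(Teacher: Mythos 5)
Your proof is correct. The paper itself offers no argument here—it cites the lemma to Lucas as a classical result—so there is nothing internal to compare against; your route (Binet form via the characteristic roots, the binomial expansion of $2^{p-1}u_p$ plus Fermat/Euler to get $u_p\equiv\left(\frac{b^2+4c}{p}\right)\pmod p$, and the Frobenius action on $\mathbb{F}_{p^2}$ swapping $\alpha$ and $\beta$ in the non-residue case) is exactly the standard proof in the literature. You are also right that the $a$ in the denominators of $\alpha,\beta$ and the sign of $\beta$ in the statement are typographical slips, and your handling of the needed side conditions ($p$ odd for dividing by $2^{p-1}$, $\alpha\beta=-c\not\equiv 0$ so the roots are units, $\sqrt{b^2+4c}\neq 0$ so the division in the Binet formula is legitimate) covers the only places where care is required.
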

\begin{proof}[Proof of Theorem \ref{thm:phiresult4}]
Let $\varphi(au_p)=m!$. Suppose that $m\geq b^2+4c$. Then $b^2+4c\mid\varphi(au_p)$ so either $b^2+4c\mid au_p$ or there exists a prime $q\mid au_p$ such that $q\equiv 1\pmod{b^2+4c}$. In the former case, we thus have $p\mid b^2+4c$ and so $p=b^2+4c$. Thus, assume the latter case. Since $b^2+4c\equiv 1\pmod 4$ and $b^2+4c$ is prime, we have by quadratic reciprocity that $b^2+4c$ is a quadratic residue $\pmod{q}$. By Lemma \ref{lem:4}, we thus have that $z(q)\mid\gcd(p,q-1)$. We must have that $z(q)=p$ and so $p\mid q-1$. Thus, $p\mid m!$ so that $p\leq m$. By Lemma \ref{lem:4}, we have
\begin{equation*}
a\alpha^p>au_p>\varphi(au_p)\geq p!>(p/e)^p.
\end{equation*}
Since $p\geq 2$, we have $p<ea^{1/2}\alpha$.

Now assume that $m<b^2+4c$ and $p\geq ea^{1/2}\alpha$. Thus, $p\geq 5$. We can work out that $au_5=a(b^4+3b^2c+c^2)$ and so $u_p\geq u_5\geq 5$. Thus,
\begin{align*}
\frac{au_p}{(b^2+4c-1)!}\leq\frac{au_p}{m!}=\frac{au_p}{\varphi(au_p)}.
\end{align*}
The right-hand side of the above inequality can be bounded with Lemma \ref{lem:3} and the result can be deduced.
\end{proof}
\begin{note}
For the rest of the paper, let $a=c=1$.
\end{note}
We now list some properties of the sequences $(u_n)_n$ and $(v_n)_n$ that are well-known. We use the following notation for the highest power of a prime dividing $n\in\mathbb{N}$ in stating these properties and in the proof of Proposition \ref{prop:4}:
\begin{nota}
Let $v_p(n)$ denote the highest power of $p$ dividing $n$.
\end{nota}
For the 8 facts that follow $n$ is any natural number. Facts \ref{fact1} through \ref{fact5} can be found in \cite{ribenboim}. \ref{fact6} and \ref{fact7} follows routinely from \ref{fact1} through \ref{fact5}. \ref{fact8} follows routinely from the other facts together with Lemma $5$ from \cite{stewart}. 
\begin{enumerate}[label=\textbf{S.\arabic*}]
\item $v_n=\alpha^n+\beta^n$ where $\alpha=\frac{b+\sqrt{b^2+4}}{2}$ and $\beta=\frac{\sqrt{b^2+4}-b}{2}$\label{fact1}
\item $u_{2n}=u_nv_n$.\label{fact2}
\item $(b^2+4)u_n^2+4(-1)^n=v_n^2$. In particular, $\gcd(u_n,v_n)=1,2,4$.\label{fact3}
\item $v_{2n}=v_n^2-2(-1)^n$.\label{fact4}
\item Let $m$ be odd. Then $v_n\mid v_{nm}$.\label{fact5}
\item Let $\alpha\geq 1$ and $m\geq 1$ be odd. We have $v_{2^{\alpha}m}-v_{2^{\alpha}}=(b^2+4)u_{2^{\alpha-1}(m-1)}u_{2^{\alpha-1}(m+1)}$.\label{fact6}
\item $v_{3n}=v_n(v_n^2+1-2(-1)^n)$\label{fact7}
\item Let $d=\nu_3(b)$ if $3\mid b$ or $d=\nu_3(b^2+2)$ if $3\nmid b$. $3\mid u_n$ implies $\nu_3(u_n)=\nu_3(n)+d$.\label{fact8}
\end{enumerate}
In the proof of Proposition \ref{prop:4} we use the following notation for the Legendre symbol:
\begin{nota}
Let $(a|q)$ denote the Legendre symbol of $a$ with respect to the prime $q$.
\end{nota}
\begin{prop}\label{prop:4}
Let $c=1$ and $b^2+4$ be prime and let $d=\nu_3(b)$ if $3\mid b$ or $d=\nu_3(b^2+2)$ if $3\nmid b$. Suppose that $\varphi(v_n)=2^x3^y$ for some $x,y,n\geq 0$ and $n=2^em$ where $e\geq 0$ and $m$ is odd. Then $e\leq 2$ and at least one of the following conditions hold:

1) $n=0,1,2,3,4,6,12$

2) $n$ is a power of $3$

3) there exists a prime $p>3$ dividing $n$ and for all such primes $p$, there exist primes $q_1,\ldots,q_l$ such that $q_i=2\cdot 3^{b_{q_i}}+1$ for some $b_{q_i}\in\mathbb{N}$ for all $1\leq i\leq l$ with $v_{2^{e}p}=v_{2^{e}}q_1\cdots q_l$, but $q_i\nmid v_{2^{e}}$ for all $1\leq i\leq t$. Moreover, let $q_1$ be the smallest $q_i$. Then $b_{q_1}\leq 4d$.
\end{prop}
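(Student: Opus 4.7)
The plan is to first bound $e$ via a modular analysis of $v_{2^e}$ and then treat the trichotomy by splitting on whether or not $n$ has a prime factor exceeding $3$. Since $b^2+4$ is a prime exceeding $2$, it is odd, so $b$ is odd; fact S.4 specialises for $k \geq 2$ to $v_{2^{k+1}} = v_{2^k}^2 - 2$, and a short induction from $v_4 = (b^2+2)^2 - 2 \equiv 7 \pmod 8$ gives $v_{2^k} \equiv 7 \pmod 8$ for every $k \geq 2$. Suppose $e \geq 3$. Since $v_{2^e} \mid v_n$ (S.5), $\varphi(v_{2^e}) \mid \varphi(v_n) = 2^x 3^y$, so every odd prime $q \mid v_{2^e}$ satisfies $q-1 = 2^a 3^c$. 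The identity $u_{2^{e+1}} = u_{2^e} v_{2^e}$ (S.2) together with $\gcd(u_{2^e}, v_{2^e}) \mid 2$ (S.3) pins the index of appearance at $z(q) = 2^{e+1}$, so Lemma \ref{lem:4} forces $q \equiv \pm 1 \pmod{2^{e+1}}$. If $q \equiv -1 \pmod{2^{e+1}}$, then $2^{e+1} \mid 2^a 3^c + 2$; since $q$ is an odd prime $a \geq 1$, and $a \geq 2$ would give $v_2(q+1) = 1 < e+1$, so $a = 1$, after which $3^c \equiv -1 \pmod{2^e}$, impossible for $e \geq 3$ as $3^c \pmod 8 \in \{1, 3\}$. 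Hence every odd prime $q \mid v_{2^e}$ is $\equiv 1 \pmod{2^{e+1}}$, giving $v_{2^e} \equiv 1 \pmod{2^{e+1}}$, contradicting $v_{2^e} \equiv 7 \pmod 8$.

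For the trichotomy, write $n = 2^e m$ with $m$ odd and $e \leq 2$. If $m = 3^f$ has no prime factor exceeding $3$, then either $e = 0$ (Case 2) or $e \in \{1, 2\}$, and in the latter I would show $f \leq 1$ (placing $n \in \{2, 4, 6, 12\}$, Case 1) by iterating the triplication identity $v_{3k} = v_k(v_k^2 - 3(-1)^k)$ (the correct form of S.7) and performing a modular analysis of the cofactor $v_k^2 - 3(-1)^k$ along the lines of the $e$-bound argument. If $m$ has a prime $p > 3$, then $v_{2^e p} \mid v_n$ (S.5, since $m/p$ is odd), so $\varphi(v_{2^e p}) \mid 2^x 3^y$. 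Writing $v_{2^e p} = v_{2^e} w$, every odd prime $q \mid w$ has $z(q) \in \{2^{e+1}, 2^{e+1} p\}$, since these are the only divisors of $2^{e+1} p$ that fail to divide $2^e p$, and $z(q) = 2^{e+1}$ would force $q \mid v_{2^e}$, which is excluded. Hence $z(q) = 2^{e+1} p$, and Lemma \ref{lem:4} yields $2^{e+1} p \mid q \pm 1$; the minus case gives $p \mid q-1 = 2^a 3^c$, impossible since $p > 3$, so $2^{e+1} p \mid q+1$. The same $2$-adic argument as above now forces $a = 1$ whenever $e \geq 1$, yielding $q = 2 \cdot 3^{b_q} + 1$; squarefreeness of $w$ and the disjointness $q_i \nmid v_{2^e}$ then come from standard prime-by-prime tracking of indices of appearance in Lucas sequences.

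Finally, to bound $b_{q_1}$ for the smallest $q_i$: from $2^{e+1} p \mid q_1 + 1 = 2(3^{b_{q_1}} + 1)$ one reads off $3^{b_{q_1}} \equiv -1 \pmod p$ and $3^{b_{q_1}} \equiv -1 \pmod{2^e}$. The bound itself I would extract by matching $3$-adic valuations on both sides of $v_{2^e p} = v_{2^e} q_1 \cdots q_l$: using $v_N = u_{2N}/u_N$ together with fact S.8, one computes $\nu_3(v_{2^e p})$ and $\nu_3(v_{2^e})$ in closed form in terms of $d$ (distinguishing the subcases $3 \mid b$ versus $3 \nmid b$ and $e \in \{0,1,2\}$), and a careful tracking of how the factor $3^{b_{q_1}}$ enters the right side delivers $b_{q_1} \leq 4d$. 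This $3$-adic bookkeeping across several subcases is the main calculational obstacle of the proof; the earlier modular and $2$-adic steps become essentially routine once facts S.2--S.5 and Lemma \ref{lem:4} are assembled.
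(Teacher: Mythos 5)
Your bound $e\leq 2$ is sound and essentially parallel to the paper's (the paper extracts a single prime factor $q\equiv 3\pmod 4$ of $v_{2^e}$ from $v_{2^e}\equiv 3\pmod 4$ rather than showing every prime factor is $\equiv 1\pmod{2^{e+1}}$, but the mechanism -- $z(q)=2^{e+1}$, Lemma \ref{lem:4}, and $\nu_2(3^{e_2}+1)\leq 2$ -- is the same), and you are right that \ref{fact7} as printed is incorrect at even indices (for $c=1$ the identity is $v_{3k}=v_k(v_k^2-3(-1)^k)$). However, two of the remaining components are not proofs. First, the claim that $e\in\{1,2\}$ and $m=3^f$ forces $f\leq 1$ -- without which $n=18$ or $n=36$ escapes all three alternatives -- is left at ``I would show''; worse, with the corrected triplication identity the cofactor $v_k^2-3$ is $\equiv 1\pmod 4$ when $v_k$ is even, so the paper's device of extracting a prime $\equiv 3\pmod 4$ from the cofactor does not transfer, and ``a modular analysis along the lines of the $e$-bound argument'' is not something you can take for granted. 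Second, for $e=0$ with $p>3\mid n$ you conclude $2p\mid q+1$ by ruling out the minus branch, but then your tool for forcing $q=2\cdot 3^{b_q}+1$ (namely $4\mid q+1$) is unavailable. The paper resolves $e=0$ differently: \ref{fact3} at the odd index $p$ gives $(b^2+4)u_p^2-4=v_p^2$, hence $(b^2+4|q)=1$ for every $q\mid v_p$, hence $z(q)\mid q-1$ and $p\mid\varphi(v_p)$, a contradiction that eliminates $e=0$ from alternative 3 altogether. Without that Legendre-symbol computation the $e=0$ subcase is unproved.

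The most serious gap is the bound $b_{q_1}\leq 4d$. Matching $3$-adic valuations across $v_{2^ep}=v_{2^e}q_1\cdots q_l$ yields nothing: each $q_i=2\cdot 3^{b_{q_i}}+1$ is $\equiv 1\pmod 3$, so $\nu_3(q_1\cdots q_l)=0$ and the identity only says $\nu_3(v_{2^ep})=\nu_3(v_{2^e})$, with $b_{q_1}$ nowhere in it. The exponent $b_{q_1}$ enters through a congruence, not a valuation: since every $q_i\equiv 1\pmod{3^{b_{q_1}}}$, one gets $3^{b_{q_1}}\mid v_{2^ep}-v_{2^e}$, and it is this \emph{difference} that has a usable factorisation, namely \ref{fact6}: $v_{2^ep}-v_{2^e}=(b^2+4)u_{2^{e-1}(p-1)}u_{2^{e-1}(p+1)}$. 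The paper then bounds $\nu_3$ of the two $u$-factors via \ref{fact8} in terms of $d$ and $\nu_3(p\pm 1)$, and closes the argument with the arithmetic relation $q_1\equiv -1\pmod p$, i.e.\ $2\cdot 3^{b_{q_1}}+2=a_{q_1}p$, comparing $3^{b_{q_1}-2d}$ with $p$ in two subcases according to the size of $a_{q_1}$. None of this appears in your sketch, and the mechanism you do propose is vacuous, so the quantitative half of alternative 3 is missing.
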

\begin{proof}
Let $n=2^{e}m$ where $m$ is odd. First, we derive that $e\leq 2$. Suppose that $e\geq 1$. We can deduce by \ref{fact4} that $v_{2^{e}}\equiv 3\pmod 4$. Hence, $v_{2^{e}}$ has a prime factor $q\equiv 3\pmod 4$. Reducing \ref{fact3} modulo $q$, we obtain $(-b^2-4\mid q)=1$. Since $q\equiv 3\pmod 4$, we have $(b^2+4\mid q)=-1$. By Lemma \ref{lem:4}, we thus have $z(q)\mid q+1$. Since $q\mid v_{2^{e}}$, we have $q\mid u_{2^{e+1}}$ by \ref{fact2}. Using \ref{fact3}, we get $q\nmid u_{2^{e}}$. Thus, $z(q)=2^{e+1}$. By Lemma \ref{lem:4}, we thus have $2^{e+1}\mid q+1$. By \ref{fact5}, $\varphi(v_{2^{e}})\mid\varphi(v_n)$. So $q-1\mid\varphi(v_n)$ and so $q=2^{e_1}3^{e_2}+1$ for some integers $e_1,e_2\geq 0$. Since $q\equiv 3\pmod 4$, $e_1=1$. Thus, $2^{e+1}\mid 2\cdot 3^{e_2}+2$ so that $2^{e}\mid 3^{e_2}+1$. Depending on the partity of $e_2$, we have $\nu_2(3^{e_2}+1)=1,2$ and so $e\leq 2$.

Now let $n=2^{e}3^{\beta}m$ where $m$ is not divisible by $2$ or $3$. Assume that $e\geq 1$ and $\beta\geq 2$. Notice that $v_3=b^3+3b$, which is even. Thus, by \ref{fact3} and \ref{fact4}, we have that $v_{2^{e}3^{\beta-1}}$ is also even. Thus, $v_{2^{e}3^{\beta-1}}^2-1\equiv 3\pmod 4$ and so there exists a prime factor $q$ of $v_{2^{e}3^{\beta-1}}^2-1$ such that $q\equiv 3\pmod 4$. By \ref{fact7}, $q\mid v_{2^e3^{\beta}}$. By \ref{fact2} and \ref{fact3}, we have $q\mid u_{2^{e+1}3^{\beta}}$ and $q\nmid u_{2^{e}3^{\beta}}$. Also, by \ref{fact3}, we have $(b^2+4)u_{2^{e+1}3^{\beta-1}}\equiv v_{2^{e+1}3^{\beta-1}}^2-4\equiv -4\pmod q$ and so $q\nmid u_{2^{e+1}3^{\beta-1}}$. Thus, $z(q)=2^{e+1}3^{\beta}$. By \ref{fact2}, we have $(b^2+4)u_{2^{e}3^{\beta-1}}^2+4=v_{2^{e}3^{\beta-1}}^2$. Modulo $q$, we can derive that $(-b^2-4\mid q)=1$. Since $q\equiv 3\pmod 4$, we have $(b^2+4\mid q)=-1$. By Lemma \ref{lem:4}, we thus have $z(q)\mid q+1$ and so $2^{e+1}3^{\beta}\mid q+1$. By \ref{fact5}, we have $\varphi(v_{2^{e}3^{\beta}})\mid\varphi(v_n)$. So $q-1\mid\varphi(v_n)$ and so $q=2^{e_1}3^{e_2}+1$ for some nonnegative integers $e_1$ and $e_2$. Since $q\equiv 3\pmod 4$, $e_1=1$.  Thus, $2^{e+1}3^{\beta}\mid 2\cdot 3^{e_2}+2$ so that $2^{e}3^{\beta}\mid 3^{e_2}+1$. But $\beta\geq 2$ and so $3\mid 3^{e_2}+1$, which cannot happen. Thus, either $e=0$ or $\beta\leq 1$. Thus, if there is no prime greater than $3$ dividing $n$, then $n=0,1,2,3,4,6,12$ or $n$ is a power of $3$.

Assume that $p>3$ is a prime factor of $m$. By \ref{fact5}, $v_{2^{e}p}$ has the same property that its Euler function is divisible only by primes which are at most $3$. By Carmichael's Theorem, there exists a prime $q$ dividing $u_{2^{e+1}p}$ such that $q$ does not divide $u_n$ for all $n<2^{e+1}p$ so that $z(q)=2^{e+1}p$. By \ref{fact2}, we have $u_{2^{e+1}p}=u_{2^{e}p}v_{2^{e}p}$ and so $q\mid v_{2^{e}p}$. Notice that since $q\nmid u_{2^{e+1}}$, we have $q\nmid v_{2^{e}}$ by \ref{fact2}. Also, notice that $2\mid b^2+1=u_3$ and so $q$ is odd. If $e=0$, then by \ref{fact3}, we have $(b^2+4)u_p^2-4=v_p^2$, which when reduced modulo $q$, gives us $(b^2+4|q) =1$. If $q=b^2+4$, then $q\mid 4$ and $q>4$, a contradiction. Thus, $q\neq b^2+4$. By Lemma \ref{lem:4}, we have $q\equiv 1\pmod {2p}$, therefore $p\mid\varphi(v_{p})$, which is a contradiction because $p>3$. This shows that the only potential solutions when $e=0$ occur when $n$ is a power of $3$. Assume now that $e\geq 1$. By Lemma \ref{lem:4}, we have $(b^2+4)u_{2^{e}p}^2+4=v_{2^{e}p}^2$, which when reduced modulo $q$ gives us $(-b^2-4|q)=1$. If $q\equiv 1\pmod 4$, then we obtain  $(b^2+4|q)=1$ from which again we can deduce that $q\equiv 1\pmod p$ by Lemma \ref{lem:4}. Hence, $p\mid\varphi({v_{2^{e}p}})$, which is a contradiction for $p>3$. Thus, we may assume that $q\equiv 3\pmod 4$ for all prime factors $q$ of $v_{2^{e}p}/v_{2^{e}}$. Thus, for each such $q$, we have $q = 2\cdot 3^{b_q}+1$ and $(b^2+4|q)=-1$. By Lemma \ref{lem:4}, we have $q\equiv -1\pmod p$ so that $2\cdot 3^{b_q}+1={a_q}p-1$ for some even integer $a_q$. Suppose that $3\mid v_{2^{e}p}/v_{2^{e}}$. Then $3\mid v_{2^{e}p}$. By \ref{fact3}, we have $3\nmid u_{2^{e}p}$. From $u_2=b$ and $u_4=b^3+2b=b(b^2+2)$, we can derive that $z(3)=2$ or $4$. Hence, $e=1$ and $z(3)=4$. Thus, $3\nmid b=v_2$, $\nmid u_2$, and $3\mid u_4$. But by \ref{fact2}, we have $u_4=u_2v_2$, a contradiction. Hence, $3\nmid v_{2^{e}p}/v_{2^{e}}$. Since $v_{2^{e}p}/v_{2^{e}}$ is odd and not divisible by $3$, we can deduce that $v_{2^{e}p}/v_p$ is squarefree since its Euler function is divisible only by primes which are at most $3$. Thus, we get that
\begin{equation*}
v_{2^{e}p}=v_{2^{e}}q_1q_2\cdots q_l
\end{equation*}
where $q_i=2\cdot 3^{b_{q_i}}+1$ for $i=1,\ldots, l$. We may assume that $1\leq b_{q_1}<\ldots<b_{q_l}$. By \ref{fact6}, we have that
\begin{equation*}
3^{b_1}\mid v_{2^{e}p}-v_{2^{e}}=(b^2+4)u_{2^{e-1}(p-1)}u_{2^{e-1}(p+1)}.
\end{equation*}
We know that at least one of $u_{2^{e-1}(p-1)}$ and $u_{2^{e-1}(p+1)}$ is divisible by $3$. Pick the value $d$ such that \ref{fact8} holds. Then by \ref{fact8}, we have
\begin{align*}
&\min\{\nu_3(u_{2^{e-1}(p-1)},\nu_3(u_{2^{e-1}(p+1)})\}\leq d\\
&\max\{\nu_3(u_{2^{e-1}(p-1)},\nu_3(u_{2^{e-1}(p+1)})\}\leq d+\max\{\nu_3(p-1),\nu_3(p+1)\}.
\end{align*}
Also, we have $b_{q_1}\leq\nu_3(u_{2^{e-1}(p-1)})+\nu_3(u_{2^{e-1}(p+1)})$ and so the first inequality implies
\begin{equation*}
\max\{\nu_3(u_{2^{e-1}(p-1)},\nu_3(u_{2^{e-1}(p+1)})\}\geq b_{q_1}-d.
\end{equation*}
Thus, we have $b_{q_1}-2d\leq\max\{\nu_3(p-1),\nu_3(p+1)\}$. Assume that $b_{q_1}\geq 2d$. Then either $3^{b_{q_1}-2d}\mid (p-1)/2$ or $3^{b_{q_1}-2d}\mid (p+1)/2$. Since $p=\frac{2\cdot 3^{b_{q_1}}+2}{2}$, we can therefore derive that $3^{b_{q_1}-2d}\mid a_{q_1}+2$ or $3^{b_{q_1}-2d}\mid a_{q_1}-2$. Since $(p+1)/2\geq 3^{b_{q_1}-2d}$, we obtain
\begin{equation*}
\frac{3^{b_{q_1}}+1}{a_{q_1}}=\frac{p}{2}>3^{b_{q_1}-2d}-1.
\end{equation*}
If $a_{q_1}\geq 3^{2d}+1$, then we have
\begin{equation*}
3^{b_{q_1}}+1>(3^{2d}+1)(3^{b_{q_1}-2d}-1)=(3^{2d}+1)3^{b_{q_1}-2d}-3^{b_{q_1}}-1=3^{b_{q_1}-2d}-1,
\end{equation*}
which implies that $b_{q_1}\leq 4d$. On the other hand, if $a_{q_1}\leq 3^{2d}-1$, we have
\begin{equation*}
3^{b_{q_1}}-2d\leq a_{q_1}+2\leq 3^{2d}+1,
\end{equation*}
which again implies that $b_{q_1}\leq 4d$. Thus, we have our result.
\end{proof}
\begin{proof}[Proof of Theorem \ref{thm:phiresult5}]
Let $c=1$. Let $b=3$. We have $3^2+4=13$ is prime. We can check that for $n\leq 12$ the only solutions are as stated. Also, we can verify that $17\mid\varphi(v_{27})$ and so $n=9$ is the highest power of $3$ that gives a solution. By Proposition \ref{prop:4}, we may assume that $v_{2^{e}p}$ has a prime factor $q$ among $7,19$, and $163$, but that this prime factor does not divide $v_{2^{e}}$ where $2^{e}p\mid n$ with $e=1$ or $2$ and $p>3$ is prime. Suppose $q=7$. We can verify that $z(7)=8$ and so $4\mid n$ or $e=2$. But then $7\mid v_{2^{e}}$, a contradiction. Now suppose that $q=19$. We can deduce that $(13|19)=-1$ and so we have $p\mid 20$ and so $p=5$. We can verify that $19\mid v_{10}$, but $19\nmid v_{20}$ so that $e=1$. But $5\mid\varphi(v_{10})$, a contradiction. Finally, assume that $q=163$. We can deduce that $(13|163)=-1$ and so we have $p\mid 164$ and so $p=41$. We can verify that $e=1$. But $41\mid\varphi(v_{82})$ and so again we get a contradiction. Thus, all of the solutions are as stated.

Let $b=5$. We have $5^2+4=29$ is prime. We can check that for $n\leq 12$ the only solutions are as stated. Also, we can verify that $11\mid\varphi(v_{9})$ and so $n=3$ is the highest power of $3$ that gives a solution. By Proposition \ref{prop:4}, we may assume that $v_{2^{e}p}$ has a prime factor $q$ among $7,19,163,487,1459$, and $39367$, but that this prime factor does not divide $v_{2^{e}}$ where $2^{e}p\mid n$ with $e=1$ or $2$ and $p>3$ is prime. Suppose $q=7$. We can verify that $z(7)=6$. But $7\mid v_{2^{e}p}$ implies $7\mid u_{2^{e+1}p}$, which cannot happen because $6\nmid 2^{e+1}p$. Now suppose that $q=19$. We can deduce that $(29|19)=-1$ and so we have $p\mid 20$ and so $p=5$. We can verify that $19\mid v_{10}$, but $19\nmid v_{20}$ so that $e=1$. But $17\mid\varphi(v_{10})$, a contradiction. Next, assume that $q=163$. We can deduce that $(29|163)=-1$ and so we have $p\mid 164$ and so $p=41$. Suppose that $e=2$. Then $163\mid v_{164}=v_{82}^2-2$, which implies that $(2|163)=1$, which is false. Thus, $e=1$. But $5\mid\varphi(v_{82})$, a contradiction. If $q=487$ or $1459$ we can deduce that $(29|q)=1$ and so we have $p\mid q-1$, which is not possible since $p>3$. If $q=39367$, then we can deduce that $(29|39367)=-1$ and so we have $p\mid 39368$. Thus, $p=7,19$, or $37$. We therefore have six choices for $2^{e}p$: $14,28,38,76,74,148$. But checking each of these, we deduce that $39367\nmid v_{2^{e}p}$, a contradiction. Thus, all of the solutions are as stated.

Let $b=7$. We have $7^2+4=53$ is prime. We can check that for $n\leq 12$ the only solutions are as stated. Also, we can verify that $17\mid\varphi(v_{9})$ and so $n=3$ is the highest power of $3$ that gives a solution. By Proposition \ref{prop:4}, we may assume that $v_{2^{e}p}$ has a prime factor $q$ among $7,19$, and $163$, but that this prime factor does not divide $v_{2^{e}}$ where $2^{e}p\mid n$ with $e=1$ or $2$ and $p>3$ is prime. Suppose $q=7$. By a congruence argument, we can deduce that $n$ must be odd, contradicting $e=1$ or $2$. Now suppose that $q=19$. We can deduce that $(53|19)=-1$ and so we have $p\mid 20$ and so $p=5$. We can verify that $19\mid v_{10}$, but $19\nmid v_{20}$ so that $e=1$. But $137\mid\varphi(v_{10})$, a contradiction. Finally, assume that $q=163$. We can deduce that $(53|163)=1$ and so we have $p\mid 162$, which is not possible since $p>3$ and so again we get a contradiction. Thus, all of the solutions are as stated.
\end{proof}
\section{Acknowledgements}
The author would like to thank Dr. Daniel Berend and Dr. Florian Luca for their suggestions with this paper and the Azrieli Foundation for the award of an Azrieli International Postdoctoral Fellowship, which made this research possible.
\bibliographystyle{plain}
\bibliography{diophantinepaper}
\end{document}